\theoremstyle{plain}
\newtheorem*{thmA}{Theorem A}
\newtheorem*{thmB}{Theorem B}
\newtheorem*{thmC}{Theorem C}
\newtheorem{thm}{Theorem}[section]
\newtheorem{lem}[thm]{Lemma}
\newtheorem{pro}[thm]{Proposition}
\theoremstyle{definition}
\newtheorem{dfn}[thm]{Definition}
\newtheorem{exa}[thm]{Example}
\newtheorem{rmk}[thm]{Remark}
\newcommand{\N}{\mathbb{N}}
\DeclareMathOperator{\mci}{mci}
\begin{document}

\title[A restriction on centralizers in finite groups]{A restriction on centralizers\\ in finite groups}
\author[G.A. Fern\'andez-Alcober]{Gustavo A. Fern\'andez-Alcober}
\address{Matematika Saila\\ Euskal Herriko Unibertsitatea UPV/EHU\\
48080 Bilbao, Spain}
\email{gustavo.fernandez@ehu.es}
\author[L. Legarreta]{Leire Legarreta}
\address{Matematika Saila\\ Euskal Herriko Unibertsitatea UPV/EHU\\
48080 Bilbao, Spain}
\email{leire.legarreta@ehu.es}
\author[A. Tortora]{Antonio Tortora}
\address{Dipartimento di Matematica\\ Universit\`a di Salerno\\
Via Giovanni Paolo II, 132\\ 84084 Fisciano (SA)\\ Italy}
\email{mtota@unisa.it}
\author[M. Tota]{Maria Tota}
\address{Dipartimento di Matematica\\ Universit\`a di Salerno\\
Via Giovanni Paolo II, 132\\ 84084 Fisciano (SA)\\ Italy}
\email{antortora@unisa.it}

\thanks{The first two authors are supported by the Spanish Government, grant
MTM2011-28229-C02-02, and by the Basque Government, grants IT460-10 and IT753-13.
The last two authors would like to thank the Department of Mathematics at the University of the Basque Country for its excellent hospitality while part of this paper was being written. They also wish to thank G.N.S.A.G.A. (INdAM) for financial support.}

\keywords{Centralizers, finite groups}
\subjclass[2010]{20D99}

\begin{abstract}
For a given $m\ge 1$, we consider the finite non-abelian groups $G$ for which
$|C_G(g):\langle g \rangle|\le m$ for every $g\in G \smallsetminus Z(G)$.
We show that the order of $G$ can be bounded in terms of $m$ and the largest prime divisor of the order of $G$.
Our approach relies on dealing first with the case where $G$ is a non-abelian finite
$p$-group.
In that situation, if we take $m=p^k$ to be a power of $p$, we show that $|G|\le p^{2k+2}$ with the only exception of $Q_8$.
This bound is best possible, and implies that the order of $G$ can be bounded by a function of $m$ alone in the case of nilpotent groups.
\end{abstract}

\maketitle

\section{Introduction}

Given a non-abelian group $G$, it makes sense to impose restrictions on the centralizers of non-central elements, and ask what the effect is on the whole of $G$.
For example, we may ask what happens if we require that $|C_G(g)|\le m$ for every
$g\in G\smallsetminus Z(G)$.
In this case, one can quickly bound the orders of the Sylow subgroups of $G$ in terms of $m$: consider separately the cases where a Sylow subgroup $P$ is central or not, and in the latter case, observe that the order of a maximal abelian subgroup of $P$ is bounded.
Consequently the order of $G$ can be bounded by a function of $m$.

A more interesting, but no less natural, restriction arises if we take into account that every element commutes with itself, and put a bound on $|C_G(g):\langle g \rangle|$ as $g$ runs over $G\smallsetminus Z(G)$.
Let us define the \emph{maximum centralizer index\/} of a non-abelian finite group $G$ as
\[
\mci(G)
=
\max \{ |C_G(g):\langle g \rangle| \mid g\in G\smallsetminus Z(G) \}.
\]
Then the goal of this paper is to get bounds for the order of $G$ under the condition that
$\mci(G)=m$.
One cannot bound the order of $G$ by a function of $m$ alone in this case;
for example, if $G$ is non-abelian of order $pq$, with $p$ and $q$ primes, then $m=1$ but $|G|$ is unbounded.
However, as we next show, it is possible to obtain interesting bounds for the order of $G$ by introducing other parameters or by restricting the class of finite groups under consideration.

As with the restriction on centralizers mentioned in the first paragraph, the strategy is to try to bound the orders of the Sylow subgroups.
Thus we begin by considering finite $p$-groups.
Since $\mci(G)$ is obviously a divisor of $|G|$, in this case we have $\mci(G)=p^k$ for some $k\ge 0$.
Rather than contenting ourselves with bounding the order of $G$, we have made an extra effort to obtain the best possible bound in terms of $p$ and $k$.

\begin{thmA}
Let $G$ be a non-abelian finite $p$-group.
If $\mci(G)=p^k$ then $|G|\le p^{2k+2}$, unless $G\cong Q_8$.
This bound is best possible.
\end{thmA}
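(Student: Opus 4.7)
The plan is to locate a large abelian subgroup $A$ of $G$ and bound $|A|$ and $|G:A|$ separately. I would take $A$ to be a maximal abelian subgroup of $G$ containing $Z(G)$; since $G$ is non-abelian, this forces $Z(G) \subsetneq A \subsetneq G$ and $C_G(A) = A$. For every $a \in A \setminus Z(G)$, the inclusion $\langle a \rangle \le A \le C_G(a)$ combined with the hypothesis yields
\[
|A : \langle a \rangle| \le |C_G(a) : \langle a \rangle| \le p^k,
\]
so $A$ contains a cyclic subgroup of index at most $p^k$ and $|A| \le p^{k+n}$, where $p^n$ is the largest order of a non-central element of $A$.

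The crux is then to establish $n \le k+1$ and $|G:A| \le p$, from which $|G| \le p^{2k+2}$ follows. For the index bound, I would pick $g \in G \setminus A$ with $g^p \in A$; such $g$ lies outside $Z(G)$, and since $\langle g \rangle \cap A = \langle g^p \rangle$ a direct count gives $|\langle g, C_A(g)\rangle| = p\,|C_A(g)|$. Applying the hypothesis to $g$ then yields $|C_A(g)| \le p^{k-1} o(g)$, which, together with the faithful action of $G/A$ on $A$, should force $|G:A| \le p$. For the exponent bound, take $a \in A \setminus Z(G)$ of order $p^n$ and $x \in G$ with $[a,x] \ne 1$; the hypothesis applied both to $a$ and to elements like $ax$ or $a^x$ (which lie in conjugates of $A$), in conjunction with the structural constraints on $A$ already established, should force $n \le k+1$.

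The exception $G \cong Q_8$ arises because it has no non-central element of order $p$, which breaks the small-element arguments above and must be handled as a separate case. Tightness for $k \ge 1$ is witnessed by the metacyclic group
\[
G = \langle a, b \mid a^{p^{k+1}} = b^{p^{k+1}} = 1,\ [a,b] = a^{p^k}\rangle,
\]
which has order $p^{2k+2}$; here $A = \langle a, b^p\rangle$ is maximal abelian of order $p^{2k+1}$, $|G:A| = p$, and a direct check gives $C_G(a) = A$, so that $|C_G(a) : \langle a\rangle| = p^k$ and $\mci(G) = p^k$. The main obstacle I anticipate is proving $n \le k+1$ cleanly, since the natural argument must weave together information about $|A|$, $|G:A|$, and the action of elements outside $A$ on $A$ without circular dependencies; a careful case analysis based on the structure of $\langle a \rangle \cap Z(G)$ and on whether $A/Z(G)$ is cyclic is likely required.
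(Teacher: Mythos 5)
Your proposed decomposition $|G|=|G:A|\,|A|$ with $|G:A|\le p$ and $|A|\le p^{2k+1}$ does not work, because the claim $|G:A|\le p$ for a maximal abelian subgroup $A$ is false. Take $G$ extraspecial of order $p^{2s+1}$ and exponent $p$ with $p$ odd and $s\ge 2$: every maximal abelian subgroup corresponds to a maximal totally isotropic subspace of the symplectic space $G/Z(G)$, hence has order $p^{s+1}$ and index $p^{s}\ge p^2$. (This group satisfies the theorem -- here $\mci(G)=p^{2s-1}$, so $|G|=p^{2s+1}\ll p^{4s}$ -- but not via your accounting: the slack sits in $|A|$ being far below $p^{k+n}$, not in the index.) The other pillar, $n\le k+1$ where $p^n$ is the largest order of a non-central element of $A$, is only asserted (``should force''); the sketch via $ax$ and $a^x$ is not an argument, and neither is the treatment of the $Q_8$ exception, which for $p=2$ would have to contend with all $2$-groups lacking non-central involutions, not just $Q_8$. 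So both load-bearing steps are missing, and one of them is irreparably wrong as stated.

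For comparison, the paper's proof is organized around a dichotomy you do not have: whether $G$ is $p$-central. In the $p$-central case it controls $|G|$ through $|G^p|$ and $|\Omega_r(G)|$ using the Gonz\'alez-S\'anchez--Weigel results on the filtration $(\Omega_i(G))$. In the non-$p$-central case it takes $A$ maximal among abelian \emph{normal} subgroups of exponent at most $p^{1+\epsilon}$, invokes Alperin's theorem $\Omega_{1+\epsilon}(C_G(A))=A$, and crucially bounds the index of a centralizer by $|G:C_G(t)|=|\{[t,x]:x\in G\}|\le |A\cap Z(G)|$ for $t\in (A\cap Z_2(G))\smallsetminus Z(G)$ -- this replaces your false index bound -- before finishing the borderline case $p=2$, $|G|=2^{2k+3}$ with Macdonald's results on Camina special $2$-groups to force $G\cong Q_8$. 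Your tightness example coincides with the paper's and is correct, but the main argument needs to be rebuilt around something like the commutator-counting inequality above rather than a bound on $|G:A|$.
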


From this result it readily follows that the order of $G$ can be bounded by a function of $\mci(G)$ alone if $G$ is a non-abelian finite $p$-group.
We note that the proof of Theorem A depends to a great extent on the theory of $p$-central $p$-groups.

In the general case of a finite group, we can use Theorem A to get the following result, where we determine a subset of $\pi(G)$ (the set of prime divisors of the order of $G$) that, together with $\mci(G)$, suffices to bound the order of the group.

\begin{thmB}
Let $G$ be a non-abelian finite group such that $\mci(G)=m$.
If $\pi^*$ is the subset of all primes $p$ in $\pi(G)$ for which a Sylow $p$-subgroup $P$ of $G$ satisfies $|P|=p$ and $C_G(P)=PZ(G)$, then
\[
|G| \le \bigg( \, \prod_{p\in\pi^*}  \, p \, \bigg) \cdot f_0(m)
\]
for some function $f_0$ which depends only on $m$.
\end{thmB}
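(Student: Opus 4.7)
The strategy is to bound $|P|$ for every Sylow $p$-subgroup $P$ of $G$ with $p \in \pi(G) \smallsetminus \pi^*$ by a function of $m$ alone. Granting this, the primes in $\pi(G) \smallsetminus \pi^*$ themselves lie below that bound $M(m)$, their number is at most $\pi(M(m))$, and the remaining factor $|G|/\prod_{p\in\pi^*}p = \prod_{p\notin\pi^*}|P_p|$ becomes bounded by some $f_0(m)$. A preliminary observation, used throughout, is that $\mci(P)\le m$: any $x\in P\smallsetminus Z(P)$ lies in $G\smallsetminus Z(G)$, and $|C_P(x):\langle x\rangle|$ divides $|C_G(x):\langle x\rangle|\le m$.

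The case analysis splits $p\notin\pi^*$ as follows. When $P$ is non-abelian, Theorem A gives $|P|\le p^{2k+2}$ with $p^k=\mci(P)\le m$; unless $P\cong Q_8$ one has $k\ge 1$ and so $p\le m$ and $|P|\le m^4$, while $Q_8$ contributes only the fixed constant $8$. When $P$ is abelian and contained in $Z(G)$, Schur--Zassenhaus furnishes $G=P\times H$ with $H$ non-abelian, and for any non-central $h\in H$ the element $(1,h)$ gives $|P|\cdot|C_H(h):\langle h\rangle|=|C_G((1,h)):\langle(1,h)\rangle|\ge|P|$, hence $|P|\le m$. When $|P|=p$ and $C_G(P)\supsetneq PZ(G)$, I would pick $y\in C_G(P)\smallsetminus PZ(G)$ of prime-power order $q^b$ (possible because $PZ(G)$ is a subgroup, so some primary component of any such element must also lie outside $PZ(G)$). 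The case $q=p$ is impossible, since $\langle P,y\rangle$ would then be an abelian $p$-subgroup of order $\ge p^2$, contradicting $|P|=p$; hence $q\ne p$, so $\langle x\rangle\cap\langle y\rangle=1$ for $x$ a generator of $P$, and combining $\langle x\rangle\le C_G(y)$ with $|C_G(y):\langle y\rangle|\le m$ yields $p\le m$.

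The remaining and most delicate case is $P$ abelian with $P\not\le Z(G)$ and $|P|>p$. Here I would first distinguish whether $\Omega_1(P)\le Z(G)$: if not, picking $x\in\Omega_1(P)\smallsetminus Z(G)$ gives $|P:\langle x\rangle|\le m$ via $P\le C_G(x)$, hence $|P|\le mp$, which together with $|P|>p$ forces $p\le m$ and $|P|\le m^2$. If $\Omega_1(P)\le Z(G)$, the standard coprime-action lemma for abelian $p$-groups, applied to the $p'$-group $N_G(P)/C_G(P)$, gives $N_G(P)=C_G(P)$, and Burnside's normal $p$-complement theorem yields $G=K\rtimes P$ with $K$ a normal Hall $p'$-subgroup. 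Taking any non-central $k\in K$ (existing since $K\le Z(G)$ would force $P\le Z(G)$), the coprimality $P\cap K=1$ collapses $|C_G(k):\langle k\rangle|\le m$ to $|C_P(k)|\le m$, and since $\Omega_1(P)\le C_P(K)\le C_P(k)$ we deduce $|\Omega_1(P)|\le m$, bounding $p\le m$ and the rank of $P$ by $\log_p m$. The main obstacle I anticipate is converting this rank bound into a bound on $|P|$: one must control $\exp(P)$, and I expect this to follow from a finer analysis of the faithful action of $P/C_P(K)$ on $K$ combined with the $\mci$ estimate applied to suitably chosen elements of $K$, yielding a polynomial bound on $|P|$ in terms of $m$ and thereby completing the proof.
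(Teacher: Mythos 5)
Your reduction to bounding $|P|$ by a function of $m$ for every Sylow $p$-subgroup with $p\notin\pi^*$ is exactly the paper's strategy, and your first three cases ($P$ non-abelian; $P\le Z(G)$; $|P|=p$ with $C_G(P)>PZ(G)$), together with the subcase $\Omega_1(P)\not\le Z(G)$ of the fourth, are handled correctly. But there is a genuine gap precisely where you flag one. In the case $P$ abelian, $P\not\le Z(G)$, $|P|>p$ and $\Omega_1(P)\le Z(G)$, you only establish $|\Omega_1(P)|\le m$, i.e.\ a bound on the rank of $P$; this does not bound $|P|$. Nothing you have proved at that point excludes, say, a cyclic $P$ of arbitrarily large order with $Z(G)\cap P$ small, and the estimate $|P:\langle y\rangle|\le m$ is vacuous when applied to a generator. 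Your closing sentence (``I expect this to follow from a finer analysis\dots'') concedes that the argument is unfinished, so the proposal as written does not prove the theorem; moreover the detour through Burnside's normal $p$-complement theorem does not by itself produce the missing control on $\exp(P)$.

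The missing idea is small but essential: produce a \emph{non-central element of $P$ lying in a subgroup of $P$ of bounded order}. You already have every ingredient. In your last case put $Z=Z(G)\cap P$; since $Z\le C_G(k)$ and $Z\cap\langle k\rangle=1$ for your non-central $p'$-element $k$, you get $|Z|\le m$. Now choose $D\le P$ with $Z\le D$ and $|D:Z|=p$ (possible as $Z<P$), and any $y\in D\smallsetminus Z$. Then $y\notin Z(G)$, $P\le C_G(y)$ because $P$ is abelian, and $\langle y\rangle\le D$, so
\[
|P:D|\le |P:\langle y\rangle|\le |C_G(y):\langle y\rangle|\le m
\qquad\text{and}\qquad
|D|=p\,|Z|\le pm,
\]
whence $|P|\le pm^2\le m^3$ (using $p\le|\Omega_1(P)|\le m$). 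This is exactly how the paper closes the abelian case, except that it runs the argument uniformly for all abelian $P$ with $p\notin\pi^*$: it takes a $q$-element $x\in G\smallsetminus PZ(G)$ with $q\ne p$, observes $|C_P(x)|\le m$, and applies the same trick with $D\ge C_P(x)$, $|D:C_P(x)|=p$. That single argument replaces your entire fourth case, the $\Omega_1$ dichotomy, and the normal $p$-complement machinery.
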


The subset $\pi^*$ can be analysed with the help of the prime graph of a finite group, which has been extensively studied in the literature.
This leads to the following consequence of Theorems A and B, which shows that very few primes in $\pi(G)$ may escape the control of $\mci(G)$.

\begin{thmC}
Let $G$ be a non-abelian finite group such that $\mci(G)=m$.
Then there exists a function $f_1$ depending only on $m$ such that:
\begin{enumerate}
\item
If $G$ is nilpotent then $|G|\le f_1(m)$.
\item
If $G$ is soluble then $|G|\le D \, f_1(m)$, where $D$ is a product of at most two prime divisors of the order of $G$.
\item
In general $|G|\le  E \, f_1(m)$, where $E$ is a product of at most four prime divisors of the order of $G$.
Thus the order of $G$ can be bounded in terms of $m$ and the largest prime divisor of
$|G|$.
\end{enumerate}
\end{thmC}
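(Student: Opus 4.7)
The plan is to build on Theorem B, which already gives $|G|\le \bigl( \prod_{p\in\pi^*} p \bigr) f_0(m)$; setting $f_1:=f_0$, the three parts reduce to bounding $|\pi^*|$ under each hypothesis. A preliminary observation used throughout is that whenever $p\in\pi^*$ one has $P\cap Z(G)=1$: indeed, $P\le Z(G)$ would give $C_G(P)=G$, and then $G=PZ(G)$ together with $P$ cyclic of order $p$ would force $G/Z(G)$ cyclic and hence $G$ abelian. Consequently $\overline P:=PZ(G)/Z(G)$ has order $p$ in $\overline G:=G/Z(G)$.

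Part (i) is essentially immediate: a non-abelian nilpotent $G$ is the direct product of its Sylow subgroups, so every Sylow subgroup centralizes all the others. For any $p\in\pi^*$ this forces $C_G(P)=G$, and the condition $C_G(P)=PZ(G)$ becomes $G=PZ(G)$, making $G$ abelian by the observation above. Thus $\pi^*=\emptyset$ and Theorem B gives $|G|\le f_1(m)$.

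For (ii) and (iii), the key claim is that every $p\in\pi^*$ is an isolated vertex of the prime graph $\Gamma(\overline G)$. To prove this, suppose $\overline g\in\overline G$ has order $pq$ with $q\ne p$. Lifting $\overline g^q$ and passing to its $p$-part yields a $p$-element $x\in G$ projecting to $\overline g^q$; since $|G|_p=p$, the element $x$ has order exactly $p$ and (after conjugating) $\langle x\rangle=P$. Analogously, one obtains a $q$-element $y\in G$ projecting to $\overline g^p$. Since $\langle x,y\rangle/(\langle x,y\rangle\cap Z(G))$ embeds into the cyclic group $\langle\overline g\rangle$ of order $pq$, it is cyclic; the standard fact that $H/Z(H)$ cyclic implies $H$ abelian then shows $\langle x,y\rangle$ is abelian, and hence $y\in C_G(P)=PZ(G)$. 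Writing $y=x^a z$ with $z\in Z(G)$, and using $P\cap Z(G)=1$ together with the fact that $y$ has $q$-power order, one concludes $x^a=1$ and $y\in Z(G)$, contradicting $\overline y\ne 1$.

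Granted the claim, $|\pi^*|$ is bounded by the number of isolated vertices of $\Gamma(\overline G)$. In the soluble case $\overline G$ is soluble, and the Gruenberg--Kegel theorem (whose soluble part essentially goes back to Higman) ensures that $\Gamma(\overline G)$ has at most two connected components; this gives $|\pi^*|\le 2$ and proves (ii) with $D:=\prod_{p\in\pi^*}p$. For the general case of (iii), I would invoke the full Gruenberg--Kegel theorem together with Williams's classification of finite simple groups with disconnected prime graph, which yields an a priori bound of six components. The main obstacle is then a careful inspection of which primes can actually occur as isolated vertices: using the strong constraints $|P|=p$ and $C_G(P)=PZ(G)$ to rule out various configurations in the Williams list, one should be able to tighten this to $|\pi^*|\le 4$, after which $|G|\le E\cdot f_1(m)\le p_{\max}^4\, f_1(m)$ is immediate.
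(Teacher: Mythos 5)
Your reduction of all three parts to bounding $|\pi^*(G)|$ via Theorem B is sound, and parts (i) and (ii) are essentially correct: the observation that $P\cap Z(G)=1$ for $p\in\pi^*$, the argument that bad primes are isolated vertices of the prime graph of $G/Z(G)$, and the two-component bound for soluble groups reproduce the content of Lemmas \ref{reduction to Z(G)=1} and \ref{bad primes are isolated} and part (ii) of Theorem \ref{bound for pi star}. (For part (i) the paper argues directly from $G=P\times H$ to get the sharper bound $8m^5$, but your route through $\pi^*(G)=\emptyset$ is equally valid, if less precise.)

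The gap is in part (iii). You propose to tighten the six-component bound to $|\pi^*(G)|\le 4$ by inspecting Williams's list, but that inequality is false: the fourth Janko group $J_4$ has five isolated vertices in its prime graph, namely $23, 29, 31, 37, 43$, each dividing $|J_4|$ only to the first power, so $|\pi^*(J_4)|=5$. The argument that actually works has two ingredients you are missing. First, if $G/Z(G)$ is insoluble then $4$ divides $|G/Z(G)|$, so $2$ is not a bad prime and the component of $2$ in the prime graph is not one of the isolated bad-prime vertices; together with the six-component bound this gives $|\pi^*(G)|\le 5$. Second, in the extremal case $|\pi^*(G)|=5$ the prime graph of $G/Z(G)$ has exactly six components, and by Zavarnitsine's recognition theorem this forces $G/Z(G)\cong J_4$, so that $\pi^*(G)=\{23,29,31,37,43\}$ is a \emph{fixed} set of primes whose product is an absolute constant and can be absorbed into $f_1(m)$. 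That is how the conclusion ``$E$ is a product of at most four prime divisors'' is reached; a pure counting argument on the number of components, however carefully one inspects the list of configurations, cannot deliver it.
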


\noindent
\textit{Notation.\/}
We use standard notation in group theory.
Also, if $G$ is a finite $p$-group then $\Omega_i(G)$ denotes the subgroup
generated by the elements of $G$ of order at most $p^i$, and $G^{p^i}$
is the subgroup generated by the $p^i$th powers of all elements of $G$.

\section{The case of finite $p$-groups}

This section is devoted to obtaining a bound for the order of a non-abelian finite $p$-group $G$, given that $\mci(G)=p^k$.
Actually, we will get the best possible bound in terms of $p$ and $k$.

\vspace{7pt}

We begin with an easy lemma, where we describe all finite groups $G$ for which
$\mci(G)=1$.

\begin{lem}
\label{mci(G)=1}
Let $G$ be a finite non-abelian group.
Then $C_G(g)=\langle g \rangle$ for every $g\in G\smallsetminus Z(G)$ if and only if
$G\cong Q_8$ or $G$ is non-abelian of order $pq$, where $p$ and $q$ are primes such that $q\equiv 1\pmod p$.
\end{lem}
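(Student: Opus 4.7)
The plan is to verify the backward direction by a short direct check and then focus on the forward direction. The key preliminary observation is that for every $g \in G \smallsetminus Z(G)$, the inclusion $Z(G) \le C_G(g) = \langle g \rangle$ forces $Z(G)$ to be cyclic and to be contained in the cyclic subgroup generated by every non-central element. For the backward direction, note that in both $Q_8$ and a non-abelian group of order $pq$ the cyclic subgroup $\langle g \rangle$ generated by any non-central $g$ is a maximal subgroup (of index $2$ in $Q_8$, and of prime index in the order-$pq$ case), so $C_G(g)$, which contains $\langle g \rangle$ but is proper in $G$, must equal $\langle g \rangle$. So I would concentrate on the forward direction, splitting according to whether $Z(G) = 1$ or $Z(G) \ne 1$.

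In the case $Z(G) = 1$, the hypothesis becomes $C_G(g) = \langle g \rangle$ for every $g \ne 1$. First I would show that each Sylow subgroup $P$ of $G$ is cyclic of prime order: any $z \in Z(P) \smallsetminus \{1\}$ gives $P \le C_G(z) = \langle z \rangle$, so $P$ is cyclic, and if $P = \langle g \rangle$ had order $p^a$ with $a \ge 2$, then $\langle g^p \rangle \subsetneq \langle g \rangle \le C_G(g^p)$ would contradict self-centralization. Hence $|G|$ is squarefree, and the standard structure theorem for Z-groups gives $G = N \rtimes H$ with $N, H$ cyclic of coprime orders and $H$ acting faithfully on $N$ (faithfulness follows from $Z(G) = 1$). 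Then any $a \in N \smallsetminus \{1\}$ with $\langle a \rangle \subsetneq N$ would satisfy $N \le C_G(a) = \langle a \rangle$, forcing $|N|$ to be prime; similarly, for any $h \in H \smallsetminus \{1\}$, the faithful action yields $C_G(h) = H$, forcing $|H|$ to be prime. Non-abelianness then supplies the congruence $q \equiv 1 \pmod p$.

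In the case $Z(G) \ne 1$, I would first reduce to the situation that $G$ is a $p$-group. Decomposing a non-central element into its commuting prime-power components produces a non-central $g$ of prime-power order $p^a$, so $Z(G) \le \langle g \rangle$ forces $Z(G)$ to be a $p$-group. But then for any other prime $q$ dividing $|G|$, a non-trivial element $h$ in a Sylow $q$-subgroup would be non-central and would force $Z(G) \le \langle h \rangle$ to also be a $q$-group, giving $Z(G) = 1$, a contradiction. So $G$ is a non-abelian $p$-group. Next, if some $h \in G$ of order $p$ were non-central, then $Z(G) \le \langle h \rangle$ of order $p$ would give $Z(G) = \langle h \rangle$, contradicting $h \notin Z(G)$; thus $\Omega_1(G) \le Z(G)$, and since $Z(G)$ is cyclic, $G$ has a unique subgroup of order $p$. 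The classical theorem characterizing such $p$-groups then forces $G$ to be cyclic (excluded by non-abelianness) or $p = 2$ with $G \cong Q_{2^n}$ for some $n \ge 3$; a direct check that $|C_G(a^2):\langle a^2 \rangle| = 2$ for $n \ge 4$ (with $a$ generating the unique maximal cyclic subgroup) pins down $G \cong Q_8$.

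The main obstacle I expect is in the case $Z(G) = 1$, specifically the passage from ``all Sylow subgroups have prime order'' to ``$|G|$ is a product of exactly two primes''. This step requires invoking the structure theorem for Z-groups and then applying the centralizer condition separately to the cyclic normal Hall subgroup and to its complement, being careful not to conflate the two arguments. The case $Z(G) \ne 1$ is comparatively clean once the reduction to a $p$-group is made, since $p$-groups with a unique subgroup of order $p$ are classical.
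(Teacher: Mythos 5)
Your proof is correct, but it is organized differently from the paper's. The paper fixes a non-central Sylow subgroup $P$ and studies a maximal abelian subgroup $A\le P$, deducing that $A$ is cyclic with $|A:A\cap Z(G)|=p$ and that Sylow subgroups for other primes meet $C_G(P)$ trivially; it then invokes a theorem from Suzuki to conclude that $P$ is cyclic or $Q_8$, disposes of the $p$-group case, and in the multi-prime case shows every Sylow subgroup has prime order before applying \cite[10.1.10]{rob}. You instead split on $Z(G)=1$ versus $Z(G)\ne 1$, with everything driven by the observation that $Z(G)\le C_G(g)=\langle g\rangle$ for each non-central $g$: when $Z(G)\ne 1$ this forces $G$ to be a $p$-group whose elements of order $p$ are all central, hence (as $Z(G)$ is cyclic) a $p$-group with a unique subgroup of order $p$, and the classical cyclic-or-generalized-quaternion theorem plus a computation in $Q_{2^n}$ yields $Q_8$; when $Z(G)=1$ you get directly that all Sylow subgroups are cyclic of prime order and finish with the same Z-group structure theorem. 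Your route trades the maximal-abelian-subgroup analysis and Suzuki's result for the (arguably more familiar) unique-minimal-subgroup theorem, at the cost of a case split the paper avoids; both arguments are elementary and of comparable length. One small imprecision: in the $Z(G)=1$ case you attribute $C_G(h)=H$ to faithfulness of the action of $H$ on $N$, but faithfulness does not by itself give $C_G(h)\le H$; fortunately all you need is the trivial inclusion $H\le C_G(h)=\langle h\rangle$ (valid since $H$ is abelian), which already forces $|H|$ to be prime.
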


\begin{proof}
If $G\cong Q_8$ or $G$ is non-abelian of order $pq$ with $p$ and $q$ primes, then it is clear that $\mci(G)=1$.

Let now $G$ be a group such that $\mci(G)=1$, and let $P$ be a Sylow $p$-subgroup of $G$ which is not central in $G$.
If $A$ is a maximal abelian subgroup of $P$, then $A\not\le Z(G)$ and
$C_G(g)=\langle g \rangle$ for every $g\in A\smallsetminus Z(G)$.
Thus we get the following:
\begin{enumerate}
\item
$A$ is cyclic and $|A:A\cap Z(G)|=p$.
\item
If $q\ne p$ is a prime, then $Q\cap C_G(P)=1$ for every Sylow $q$-subgroup $Q$ of $G$.
In particular, $Q\cap Z(G)=1$.
\end{enumerate}

It follows from (i) and \cite[4.4]{suz2} that $P$ is either cyclic or isomorphic to $Q_8$.
In particular, if $G$ is a finite $p$-group then $G\cong Q_8$.
So we assume that the order of $G$ is divisible by at least two primes.
Let $K$ be an arbitrary non-trivial Sylow subgroup of $G$.
By taking (ii) into acount, $K$ is not central in $G$, and so $K$ can play the role of $P$ in the previous paragraph.
Hence $|A:A\cap Z(G)|$ is a prime for every maximal abelian subgroup $A$ of $K$.
But by (ii) above (corresponding to $P$), we know that $K\cap Z(G)=1$.
It follows that every maximal abelian subgroup of $K$ is of prime order, and so $K$ itself is of prime order.

Hence the order of $G$ is square-free.
By \cite[10.1.10]{rob}, $G$ is the semidirect product of two cyclic subgroups of coprime orders.
According to (ii), these two cyclic subgroups must be of prime order.
We conclude that $G$ is a non-abelian group of order $pq$ for two primes $p$ and $q$, as desired.
\end{proof}

A key ingredient in the proof of Theorem A will be the theory of $p$-central $p$-groups.
We recall the definition for the convenience of the reader.

\begin{dfn}
Let $G$ be a finite $p$-group.
We say that $G$ is \emph{$p$-central\/} if $p>2$ and $\Omega_1(G)\le Z(G)$, or if $p=2$ and $\Omega_2(G)\le Z(G)$.
\end{dfn}

Classical references for $p$-central $p$-groups are Buckley's paper \cite{buc} (where they are introduced only for $p>2$, and are called PN-groups) and Laffey's paper \cite{laf}.
We will need results about $p$-central $p$-groups from the recent article \cite{gon-wei} by Gonz\'alez-S\'anchez and Weigel, who deal with a generalization of this class of groups.

\vspace{7pt}

The $p$-central $p$-groups are somehow dual to powerful $p$-groups, which are defined by the condition $G'\le G^p$ if $p$ is odd, or $G'\le G^4$ if $p=2$.
A well-known property of powerful $p$-groups is that
$|G^{p^{i+1}}:G^{p^{i+2}}|\le |G^{p^i}:G^{p^{i+1}}|$ for every $i\ge 0$
(see Theorem 11.15 in \cite{khu}).
We will need the following dual property of $p$-central $p$-groups.

\begin{lem}
\label{consecutive indices}
Let $G$ be a $p$-central $p$-group.
Then $|\Omega_{i+2}(G):\Omega_{i+1}(G)|\le |\Omega_{i+1}(G):\Omega_i(G)|$ for every
$i\ge 0$.
\end{lem}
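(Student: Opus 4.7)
The plan is to exhibit an injective homomorphism
\[
\varphi_i \colon \Omega_{i+2}(G)/\Omega_{i+1}(G) \longrightarrow \Omega_{i+1}(G)/\Omega_i(G)
\]
induced by the $p$-th power map $x\mapsto x^p$; the index inequality then follows at once. Both source and target are elementary abelian, a standard feature of $p$-central $p$-groups available from the framework of \cite{gon-wei}, so it is legitimate to treat $\varphi_i$ as an $\F_p$-linear map.

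First I would verify that the $p$-th power of an element of order at most $p^{i+2}$ has order at most $p^{i+1}$, and hence lies in $\Omega_{i+1}(G)$; likewise $\Omega_{i+1}(G)^p \subseteq \Omega_i(G)$. For $p$-central $p$-groups, Laffey's theorem ensures that $\Omega_j(G)$ is exactly the set (not just the subgroup generated) of elements of order dividing $p^j$, so these inclusions are automatic. Injectivity of $\varphi_i$ is then immediate: if $x^p\in\Omega_i(G)$, then $x^{p^{i+1}}=1$ and hence $x\in\Omega_{i+1}(G)$, so its class is trivial.

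The main step, and the principal obstacle, is the homomorphism property
\[
(xy)^p \equiv x^p y^p \pmod{\Omega_i(G)} \qquad \text{for all } x,y\in \Omega_{i+2}(G).
\]
For $p$ odd the strategy is to expand $(xy)^p$ via the Hall--Petrescu collection formula and then show that each commutator correction term lies in $\Omega_i(G)$; the hypothesis $\Omega_1(G)\le Z(G)$ feeds into inductive estimates on the lower central terms of $\langle x,y\rangle$, which are precisely the calculations developed in \cite{gon-wei} and which I would invoke rather than reprove. For $p=2$ the stronger assumption $\Omega_2(G)\le Z(G)$ plays the analogous role. Controlling these commutator correction terms is the characteristic difficulty in the theory of $p$-central $p$-groups, and it is here that the assumption earns its keep.
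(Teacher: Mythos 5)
Your overall architecture is the same as the paper's: the inequality is to come from the $p$th power map inducing a map from $\Omega_{i+2}(G)/\Omega_{i+1}(G)$ into $\Omega_{i+1}(G)/\Omega_i(G)$, and everything hinges on the congruence $(xy)^p\equiv x^py^p \pmod{\Omega_i(G)}$. But that congruence is exactly the point where your argument stops: you call it ``the main step, and the principal obstacle'' and then defer it to unspecified ``calculations developed in \cite{gon-wei}'' via a Hall--Petrescu expansion inside $\langle x,y\rangle$ for $x,y\in\Omega_{i+2}(G)$. As written this is not a proof. The subgroup $\Omega_{i+2}(G)$ of a $p$-central group can have large nilpotency class, the Hall--Petrescu formula produces correction terms $c_j^{\binom{p}{j}}$ with $c_j\in\gamma_j(\langle x,y\rangle)$ for all $j\le p$, and you give no mechanism for pushing each of these into $\Omega_i(G)$; the bare hypothesis $\Omega_1(G)\le Z(G)$ does not do this by itself.

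The missing idea, which is the paper's key move, is to quotient first: by Theorem B of \cite{gon-wei}, $G/\Omega_i(G)$ is again $p$-central and $\exp\Omega_i(G)\le p^i$, so $\Omega_{j}(G/\Omega_i(G))=\Omega_{i+j}(G)/\Omega_i(G)$ and the whole statement reduces to the case $i=0$, i.e.\ to showing $|\Omega_2(G):\Omega_1(G)|\le|\Omega_1(G)|$. In that base case the difficulty evaporates: for $p=2$ the group $\Omega_2(G)$ is central, hence abelian, and squaring is trivially a homomorphism into $\Omega_1(G)$; for $p>2$ one has $\Omega_2(G)\le Z_2(G)$ (because $G/\Omega_1(G)$ is $p$-central), so $\Omega_2(G)$ has class at most $2$ with $\exp\Omega_2(G)'\le p$, and the single correction term $[y,x]^{\binom{p}{2}}$ dies because $p\mid\binom{p}{2}$. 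A direct argument along your lines can in fact be salvaged by invoking the specific relation $[\Omega_{j+1}(G),G]\le\Omega_j(G)$ to locate the $c_j$ in the $\Omega$-series, but you neither cite that relation nor check that the binomial exponents then annihilate each term, so the central verification remains a black box. (Your injectivity discussion is correct but superfluous: once the power map is a homomorphism $\Omega_2(G)\to\Omega_1(G)$, its kernel lies in $\Omega_1(G)$ and the index bound follows without needing the induced map on quotients to be injective.)
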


\begin{proof}
By Theorem B of \cite{gon-wei}, we know that $G/\Omega_i(G)$ is $p$-central, and that
$\exp \Omega_i(G)\le p^i$ for every $i\ge 1$.
Then
\[
| \Omega_2(G/\Omega_i(G)):\Omega_1(G/\Omega_i(G)) |
=
| \Omega_{i+2}(G):\Omega_{i+1}(G) |,
\]
and if we work with $G/\Omega_i(G)$ instead of $G$, it suffices
to prove that $|\Omega_2(G):\Omega_1(G)|\le |\Omega_1(G)|$.
This follows immediately if we see that the map $x\mapsto x^p$ is a homomorphism from
$\Omega_2(G)$ to $\Omega_1(G)$.
This result is obvious if $p=2$, since $\Omega_2(G)$ is then abelian.
If $p>2$ then $\Omega_2(G)\le Z_2(G)$, since $G/\Omega_1(G)$ is $p$-central.
Hence $\Omega_2(G)$ has class at most $2$, and
$\exp \Omega_2(G)' \le \exp \Omega_1(G)\le p$.
Thus $(xy)^p=x^py^p[y,x]^{\binom{p}{2}}=x^py^p$ for every $x,y\in \Omega_2(G)$, and we are done.
\end{proof}

After these preliminary results, we can now prove Theorem A.

\begin{thm}
\label{mci finite p-group}
Let $G$ be a non-abelian finite $p$-group such that $\mci(G)=p^k$.
Then $|G|\le p^{2k+2}$, unless $G$ is isomorphic to the quaternion group $Q_8$.
\end{thm}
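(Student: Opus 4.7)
The plan is to fix an element $g\in G\setminus Z(G)$ realising the maximum, so that $|C_G(g):\langle g\rangle|=p^{k}$ and $|g|=p^{n}$. Observing that $\langle g\rangle Z(G)\le C_G(g)$ is abelian yields immediately the fundamental inequality
\[
\frac{|Z(G)|}{|\langle g\rangle\cap Z(G)|}\le p^{k},
\]
which controls $Z(G)$ modulo its intersection with the cyclic subgroup generated by any non-central element. This will be the workhorse of the whole argument; in particular, if $\langle g\rangle\cap Z(G)=1$ for some $g\in G\setminus Z(G)$, then $|Z(G)|\le p^{k}$ at once.

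I would split the proof according to whether $G$ is $p$-central in the sense of the definition above. The main case, on which the authors explicitly rely, is when $G$ is $p$-central; here Lemma~\ref{consecutive indices} gives the telescoping estimate $|G|\le|\Omega_1(G)|^{e}$ with $p^{e}=\exp G$. Pushing this down to $p^{2k+2}$ requires a sharp simultaneous bound on $|\Omega_1(G)|$ and $e$: the two extremal shapes $(|\Omega_1(G)|,e)=(p^{2},k+1)$ and $(p^{k+1},2)$ suggest that neither factor can be controlled alone. I would therefore balance them by applying the fundamental inequality both to an element of maximal order (exploiting that in a $p$-central group with $p$ odd every non-central $g$ of order $p^{n}$ satisfies $g^{p^{n-1}}\in Z(G)$, so $\langle g\rangle$ penetrates the centre) and to elements of maximal order in each successive quotient $G/\Omega_{i}(G)$, which remains $p$-central by Theorem~B of \cite{gon-wei}.

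If $G$ is not $p$-central then there exists $x\in G\setminus Z(G)$ with $|x|\le p$ (for $p$ odd) or $|x|\le 4$ (for $p=2$). For $p$ odd we have $\langle x\rangle\cap Z(G)=1$, so the fundamental inequality collapses to $|Z(G)|\le p^{k}$; for $p=2$ a similar argument, possibly after replacing $x$ by $x^{2}$, produces $|Z(G)|\le 2^{k+1}$. To bound $|G/Z(G)|$ I would take a maximal abelian subgroup $A\supseteq\langle x\rangle Z(G)$, use $C_G(A)=A$ together with the embedding of $G/A$ into a $p$-subgroup of $\Aut(A)$, and feed back the structural constraints on $A$ coming from the fundamental inequality. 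The group $Q_{8}$ should emerge here as the unique exception at $p=2$, $k=0$: Lemma~\ref{mci(G)=1} already foreshadows it, and in the present set-up the constraints $\mci(G)=1$, $\langle x\rangle\cap Z(G)=\langle x^{2}\rangle\ne 1$, and $|G/Z(G)|$ maximal conspire to force $G$ into exactly that configuration.

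The main obstacle is the $p$-central case, where the bound $p^{2k+2}$ is sharp and Lemma~\ref{consecutive indices} leaves no slack, so the interplay between $|\Omega_1(G)|$ and $e$ must be handled with no waste. A secondary technical hurdle is the stricter definition of $p$-centrality at $p=2$ (requiring $\Omega_2(G)\le Z(G)$), which, combined with the possibility $\langle x\rangle\cap Z(G)=\langle x^{2}\rangle\neq 1$, is precisely what admits the $Q_{8}$ exception rather than ruling it out.
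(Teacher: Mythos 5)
Your top-level dichotomy ($p$-central versus not) matches the paper's, but in both branches the decisive steps are missing or would not reach the bound $p^{2k+2}$. In the $p$-central case, the telescoped estimate $|G|\le |\Omega_1(G)|^{e}$ from Lemma~\ref{consecutive indices} is far too lossy: nothing in your ``fundamental inequality'' prevents, say, $|\Omega_1(G)|=p^{k+1}$ together with $e=k+1$, which would only give $|G|\le p^{(k+1)^2}$. The paper does not telescope all the way down to $\Omega_1(G)$; it pivots on the largest $r$ with $\Omega_r(G)\le Z(G)$, proves $|\Omega_{r+1}(G):\Omega_r(G)|\ge p^2$, and then exploits two facts you never invoke: that $G^p\le C_G(g)$ for every $g\in\Omega_{r+1}(G)\smallsetminus Z(G)$ (because $[\Omega_{r+1}(G),G^p]=[\Omega_{r+1}(G)^p,G]=1$), and that $|G:G^p|\le|\Omega_1(G)|$ (Theorem C of the Gonz\'alez-S\'anchez--Weigel paper). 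These yield $|G^p|\le p^{k+r+1}$ and $|\Omega_r(G)|\le p^{k+r}$, and even then a further case split on whether $G^p\le\Omega_r(G)$ is needed to shave one more factor of $p$ and land exactly on $p^{2k+2}$. Your plan to ``balance'' $|\Omega_1(G)|$ against $e$ by applying the centralizer inequality to elements of maximal order in successive quotients is not an argument for this; the quotients $G/\Omega_i(G)$ may be abelian or may have $\mci$ unrelated to $p^k$, so the hypothesis does not pass to them.

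In the non-$p$-central case the proposed route through $C_G(A)=A$ and $G/A\hookrightarrow\Aut(A)$ fails on two counts. First, a maximal abelian subgroup containing $\langle x\rangle Z(G)$ need not be normal, so you only control $N_G(A)/A$, not $G/A$. Second, even for a normal $A$ with $|A|\le p^{k+1}$, the $p$-part of $\Aut(A)$ can be of order about $p^{k(k+1)/2}$, which is hopeless against the target $p^{2k+2}$. The paper instead takes $A$ maximal among abelian \emph{normal} subgroups of exponent at most $p^{1+\epsilon}$, applies Alperin's theorem to get $\Omega_{1+\epsilon}(C_G(A))=A$, picks $t\in (A\cap Z_2(G))\smallsetminus Z(G)$, and uses the counting bound $|G:C_G(t)|=|\{[t,x]\mid x\in G\}|\le |A\cap Z(G)|$ (valid because $[t,x]\in A\cap Z(G)$ for all $x$), which combines with $|C_G(t)|\le p^{k+1+\epsilon}$ to give $|G|\le p^{2k+1+2\epsilon}$. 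This already settles $p>2$, and for $p=2$ leaves only the boundary case $|G|=2^{2k+3}$, which is eliminated by showing $G$ is a Camina special $2$-group and invoking Macdonald's inequality $|G:G'|\ge |G'|^2$ to force $k=0$ and $G\cong Q_8$. Your treatment of the exceptional case (``the constraints conspire to force $G$ into exactly that configuration'') has no substitute for this last argument, which is where the real work of isolating $Q_8$ happens.
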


\begin{proof}
Assume first that $G$ is $p$-central, and let $r$ be such that $\Omega_r(G)\le Z(G)$ but
$\Omega_{r+1}(G)\not\le Z(G)$.
If $|\Omega_{r+1}(G):\Omega_r(G)|=p$ then
\[
| \Omega_{i+1}(G/\Omega_r(G)): \Omega_i(G/\Omega_r(G)) |
=
| \Omega_{i+r+1}(G):\Omega_{i+r}(G) | \le p
\]
for every $i\ge 0$, by Lemma \ref{consecutive indices}.
It follows that $G/\Omega_r(G)$ is cyclic, and then $G$ is abelian, since
$\Omega_r(G)\le Z(G)$.
Thus we have
\begin{equation}
\label{index at least p^2}
|\Omega_{r+1}(G):\Omega_r(G) | \ge p^2.
\end{equation}
Again by Lemma \ref{consecutive indices}, it follows that
\[
| \Omega_r(G) |
=
\prod_{i=0}^{r-1} \, | \Omega_{i+1}(G):\Omega_i(G) |
\ge
p^{2r-2} \, | \Omega_1(G) |.
\]
Since $|G:G^p|\le |\Omega_1(G)|$ by Theorem C of \cite{gon-wei}, we get
\begin{equation}
\label{bound G with G^p Omega_r}
|G| \le |G^p| \, |\Omega_r(G)| / p^{2r-2}.
\end{equation}

Let us choose an arbitrary element $g\in \Omega_{r+1}(G)\smallsetminus Z(G)$.
Since $\Omega_{r+1}(G)\le Z_2(G)$, we have
\[
[\Omega_{r+1}(G),G^p]
=
[\Omega_{r+1}(G)^p,G]
\le
[\Omega_r(G),G] = 1.
\]
Hence $G^p\le C_G(g)$, and since $\mci(G)=p^k$, we have
\[
p^k
\ge
| C_G(g):\langle g \rangle |
\ge
| G^p \langle g \rangle : \langle g \rangle |
\ge
|G^p| / p^{r+1},
\]
and
\begin{equation}
\label{bound G^p}
|G^p| \le p^{k+r+1}.
\end{equation}
Similarly,
\begin{equation}
\label{mci star p-central}
\begin{split}
p^k
\ge
| C_G(g):\langle g \rangle |
&\ge
| G^p\Omega_r(G) \langle g \rangle : \Omega_r(G) \langle g \rangle |
\,
| \Omega_r(G) \langle g \rangle : \langle g \rangle |
\\
&=
| G^p\Omega_r(G) \langle g \rangle : \Omega_r(G) \langle g \rangle |
\,
| \Omega_r(G) | / p^r,
\end{split}
\end{equation}
and in particular
\begin{equation}
\label{bound Omega_r}
|\Omega_r(G)| \le p^{k+r}.
\end{equation}

Now we consider separately the cases $G^p\not\le \Omega_r(G)$ and
$G^p\le \Omega_r(G)$.
Assume first that $G^p\not\le \Omega_r(G)$.
Since $\Omega_{r+1}(G)/\Omega_r(G)$ is  an elementary abelian $p$-group of order at least $p^2$, and since $Z(G)/\Omega_r(G)$ is a proper subgroup of
$\Omega_{r+1}(G)/\Omega_r(G)$, it follows that
\[
\bigcap_{g\in \Omega_{r+1}(G)\smallsetminus Z(G)} \, \langle g \rangle \Omega_r(G)
=
\Omega_r(G).
\]
Consequently, we can choose $g\in \Omega_{r+1}(G)\smallsetminus Z(G)$ in such a way that $G^p\not\le \langle g \rangle \Omega_r(G)$.
By (\ref{mci star p-central}), we can improve (\ref{bound Omega_r}) to
\begin{equation}
\label{better bound Omega_r}
|\Omega_r(G)|\le p^{k+r-1}.
\end{equation}
On the other hand, if $G^p\le \Omega_r(G)$ then by (\ref{bound Omega_r}) we can improve (\ref{bound G^p}) to
\begin{equation}
\label{better bound G^p}
|G^p| \le p^{k+r}.
\end{equation}
Thus we can combine either (\ref{bound G^p}) and (\ref{better bound Omega_r}), or
(\ref{bound Omega_r}) and (\ref{better bound G^p}), and then use
(\ref{bound G with G^p Omega_r}) to get $|G|\le p^{2k+2}$ in any case.
This completes the proof when $G$ is $p$-central.

Assume now that $G$ is not $p$-central, and suppose that $|G|>p^{2k+2}$.
We are going to prove that $G\cong Q_8$.
Put $\epsilon=0$ or $1$, according as $p>2$ or $p=2$.
Let us choose a subgroup $A$ of $G$ which is maximal in the set of abelian normal subgroups of $G$ of exponent at most $p^{1+\epsilon}$.
By a well-known theorem of Alperin \cite[Chapter III, Theorem 12.1]{hup}, we have
$\Omega_{1+\epsilon}(C_G(A))=A$.
If $A\le Z(G)$ then we get $\Omega_{1+\epsilon}(G)\le Z(G)$, which is not the case.

Thus $(A\cap Z_2(G))\smallsetminus (A\cap Z(G))$ is not empty.
Let $t$ be an arbitrary element in that difference.
Then
\begin{equation}
\label{1st ineq}
p^k \ge | C_G(t):\langle t \rangle | \ge |C_G(t)|/p^{1+\epsilon},
\end{equation}
and in particular
\begin{equation}
\label{2nd ineq}
|A| \le |C_G(t)| \le p^{k+1+\epsilon}.
\end{equation}
Thus
\begin{equation}
\label{3rd ineq}
|A\cap Z(G)| \le |A|/p \le p^{k+\epsilon}.
\end{equation}
On the other hand,
\begin{equation}
\label{4th ineq}
|G:C_G(t)| = | \{ [t,x] \mid x\in G \} | \le |A\cap Z(G)|.
\end{equation}
Consequently
\[
|G| = |G:C_G(t)| \, |C_G(t)| \le p^{2k+1+2\epsilon}.
\]
Since $|G|>p^{2k+2}$, this implies that $p=2$ and $|G|=2^{2k+3}$.
Thus all inequalities in (\ref{1st ineq}), (\ref{2nd ineq}), (\ref{3rd ineq}), and (\ref{4th ineq}) are actually equalities.
It follows that $C_G(t)=A$, and consequently $Z(G)\le A$, that $|A:Z(G)|=2$,
$|A|=2^{k+2}$, and
\begin{equation}
\label{precamina}
Z(G) = \{ [t,x] \mid x\in G \}.
\end{equation}

Since $|A:Z(G)|=2$, we have $A\le Z_2(G)$, and any element of $A\smallsetminus Z(G)$
is a valid choice for $t$.
Also,
\[
[A,G^2] = [A^2,G] \le [Z(G),G] = 1,
\]
and so $G^2\le C_G(t)=A$.
If $g^2\in A\smallsetminus Z(G)$ for some $g\in G$ then we can choose $t=g^2$, and
$g\in C_G(t)\smallsetminus A$, which is a contradiction.
We conclude that $G^2\le Z(G)$.
Since $G'\le G^2$, it follows that $G$ is a group of class $2$, and by (\ref{precamina}), \begin{equation}
\label{camina}
G' = \{ [t,x]\mid x\in G \} = Z(G),
\quad
\text{for every $t\in A\smallsetminus Z(G)$.}
\end{equation}
In particular $|G'|=2^{k+1}$.

On the other hand, we have
\[
\exp Z(G) = \exp G' = \exp G/Z(G) = 2,
\]
by using that $G$ is of class $2$.
Hence $\exp G=4$.
Thus if we choose an arbitrary element $g\in G\smallsetminus G'$, then
$\langle g \rangle Z(G)$ is a normal abelian subgroup of $G$ of exponent at most $4$.
By embedding this subgroup in a maximal abelian normal subgroup of exponent at most $4$, we see that $g$ can play the same role as $t$ above, and in particular
\[
G' = \{ [g,x] \mid x\in G \},
\]
by (\ref{camina}).
Since this holds for every $g\in G\smallsetminus G'$, we conclude that $G$ is a Camina group.
Also $G$ is a special $2$-group, i.e.\ $G'=\Phi(G)=Z(G)$.
We may then apply Theorems 3.1 and 3.2 of \cite{mac}, which are valid for Camina special $p$-groups, to get $|G:G'|\ge |G'|^2$.
Thus
\begin{equation}
\label{last bound for 2}
2^{2k+3} = |G| = |G:G'| \, |G'| \ge |G'|^3 = 2^{3k+3},
\end{equation}
and necessarily $k=0$.
Hence $\mci(G)=1$, and $G\cong Q_8$ by Lemma \ref{mci(G)=1}.
(Alternatively, we get $|G|=8$ from (\ref{last bound for 2}), and so $G\cong Q_8$ or $D_8$.
Since $\mci(Q_8)=1$ but $\mci(D_8)=2$, we necessarily have $G\cong Q_8$.)
\end{proof}

Now we present an example which shows that the bound $|G|\le p^{2k+2}$ in Theorem A is best possible.

\begin{exa}
\label{bound best possible}
Let $p$ be an arbitrary prime, and let $G$ be the group given by the following presentation:
\[
G = \langle a,b \mid a^{p^{k+1}}=b^{\,p^{k+1}}=1,\ a^b=a^{1+p^k} \rangle.
\]
Then $|G|=p^{2k+2}$, $Z(G)=\langle a^p,b^{\,p} \rangle$ and $o(g)=p^{k+1}$ for every
$g\in G\smallsetminus Z(G)$.
By using these facts, one can readily check that $\mci(G)=p^k$.
\end{exa}

\section{The general case}

Now we deal with arbitrary non-abelian finite groups.
We already know that it is not possible to give a general bound for the order of $G$ in terms of $\mci(G)$ alone, and so our goal is to try to obtain bounds by incorporating other parameters.
Contrary to the case of finite $p$-groups, we will not try to get best possible bounds.

\vspace{7pt}

First of all, we use Theorem A to obtain a bound for the order of a non-abelian Sylow subgroup.

\begin{pro}
\label{non-central sylow}
Let $G$ be a non-abelian finite group such that $\mci(G)=m$.
If $P$ is a non-abelian Sylow subgroup of $G$, then $|P|\le 8m^4$.
\end{pro}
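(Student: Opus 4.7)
My plan is to apply Theorem A directly to $P$, after first showing that $\mci(P)\le m$. The key observation is the inclusion $Z(G)\cap P\subseteq Z(P)$: any element of $G$ that commutes with everything in $G$ must in particular commute with everything in $P$. Taking contrapositives, every $g\in P\smallsetminus Z(P)$ lies in $G\smallsetminus Z(G)$, so the hypothesis $\mci(G)=m$ does apply to such elements. Since also $\langle g\rangle\le C_P(g)\le C_G(g)$, I obtain
\[
|C_P(g):\langle g \rangle|\le |C_G(g):\langle g \rangle|\le m
\]
for every $g\in P\smallsetminus Z(P)$; because $P$ is non-abelian this set is non-empty, and taking the maximum yields $\mci(P)\le m$.

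Now $\mci(P)$ is a power of $p$, say $\mci(P)=p^j$ with $p^j\le m$. Theorem A then gives two cases. If $P\cong Q_8$ then $|P|=8$, and since $m\ge 1$ this is bounded by $8m^4$. Otherwise $|P|\le p^{2j+2}$; here I would rule out $j=0$, because $\mci(P)=1$ combined with Lemma \ref{mci(G)=1} would force $P\cong Q_8$ or $|P|=pq$ for distinct primes, neither compatible with a non-abelian, non-$Q_8$ $p$-group. Therefore $j\ge 1$, which in particular gives $p\le p^j\le m$, and hence
\[
|P|\le p^{2j+2}=p^2\cdot(p^j)^2\le m^2\cdot m^2=m^4\le 8m^4.
\]

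There is no serious obstacle here: the whole argument reduces, after the one-line transfer $\mci(P)\le\mci(G)$, to Theorem A together with a short case split that isolates the exceptional $Q_8$. The factor $8$ in the statement is precisely what is needed to accommodate that exception, since in every other situation the argument actually yields the sharper bound $|P|\le m^4$.
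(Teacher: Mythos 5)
Your proof is correct and follows essentially the same route as the paper: transfer the bound via $\mci(P)\le\mci(G)$, isolate the exceptional case where $\mci(P)=1$ forces $P\cong Q_8$ by Lemma \ref{mci(G)=1}, and otherwise use $p\le m$ together with Theorem A to get $|P|\le p^2\cdot(p^j)^2\le m^4$. The only difference is cosmetic: you spell out the justification of $\mci(P)\le\mci(G)$ (which the paper takes as evident) and parametrize by $j$ with $\mci(P)=p^j$ rather than by the integer part of $\log_p m$.
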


\begin{proof}
Let $P$ be a Sylow subgroup for the prime $p$, and let $k$ be the integer part of
$\log_p m$.
Since $\mci(P)\le \mci(G)$ and $p^k\le m<p^{k+1}$, it follows that $\mci(P)\le p^k$.
If $k=0$ then $P\cong Q_8$ by Lemma \ref{mci(G)=1}.
Otherwise we have $p\le m$, and then by applying Theorem A, we get
\[
|P| \le p^{2k+2} = p^2 \cdot p^{2k} \le m^4.
\]
\end{proof}

It immediately follows that a bound in terms of $\mci(G)$ alone exists for nilpotent groups, i.e.\ part (i) of Theorem C.

\begin{thm}
Let $G$ be a nilpotent non-abelian finite group such that $\mci(G)=m$.
Then $|G|\le 8m^5$.
\end{thm}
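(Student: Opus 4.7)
The plan is to exploit the fact that a nilpotent finite group is the internal direct product of its Sylow subgroups and then apply Proposition \ref{non-central sylow} to a non-abelian Sylow factor. Since $G$ is non-abelian and nilpotent, at least one Sylow subgroup must be non-abelian (a direct product of abelian groups is abelian), so we may write $G = P \times H$, where $P$ is a non-abelian Sylow $p$-subgroup for some prime $p$, and $H$ is the direct product of the remaining Sylow subgroups of $G$. Proposition \ref{non-central sylow} immediately gives $|P| \le 8m^4$, so the whole proof reduces to showing $|H| \le m$.

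To bound $|H|$, I would pick any $g \in P \smallsetminus Z(P)$, which exists because $P$ is non-abelian. Such a $g$ lies in $G \smallsetminus Z(G)$, since any element central in $G$ would in particular centralize all of $P$ and thus lie in $Z(P)$. Because $G = P \times H$, every element of $H$ commutes with $g$, so $H \le C_G(g)$; and because $\gcd(|P|,|H|) = 1$, we have $\langle g \rangle \cap H = 1$. Hence
\[
m = \mci(G) \ge |C_G(g) : \langle g \rangle| \ge |\langle g \rangle H : \langle g \rangle| = |H|,
\]
so $|H| \le m$. Multiplying the two bounds yields $|G| = |P|\,|H| \le 8m^4 \cdot m = 8m^5$.

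I do not expect any real obstacle here: all the difficulty has been absorbed into Theorem A and its consequence Proposition \ref{non-central sylow}, and the nilpotent case then follows from a one-line argument using the direct-product decomposition. The only point worth stating explicitly in the write-up is why $g \in P \smallsetminus Z(P)$ is automatically non-central in $G$, to justify that $g$ is a legitimate input to the definition of $\mci(G)$.
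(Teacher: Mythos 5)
Your proof is correct and follows essentially the same route as the paper: decompose the nilpotent group as $G=P\times H$ with $P$ a non-abelian (equivalently, non-central) Sylow subgroup, apply Proposition \ref{non-central sylow} to get $|P|\le 8m^4$, and bound $|H|\le m$ via the centralizer of a non-central element of $P$. The paper leaves the $|H|\le m$ step implicit; your write-up just spells out that detail.
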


\begin{proof}
Let $P$ be a non-central Sylow subgroup of $G$.
Since $G$ is nilpotent, we have $G=P\times H$ for some subgroup $H$ of $G$.
Then $P$ is non-abelian, and $|P|\le 8m^4$ by Proposition \ref{non-central sylow}.
On the other hand, since $P$ is non-central and $\mci(G)=m$, it follows that
$|H|\le m$.
\end{proof}

The obstruction to get a bound in terms of $\mci(G)$ alone in the general case is that there may be `bad prime divisors' of the order of $G$ which cannot be bounded by a function of
$\mci(G)$.
Let us make this precise.

\begin{dfn}
Let $G$ be a non-abelian finite group, and let $p$ be a prime divisor of the order of $G$.
We say that $p$ is a \emph{bad prime for mci\/} in $G$ if a Sylow $p$-subgroup $P$ of
$G$ is of order $p$ and $C_G(P)=PZ(G)$.
We write $\pi^*(G)$ for the set of bad primes for mci in $G$.
\end{dfn}

Observe that if $p\in\pi^*(G)$ then $C_G(P)=P\times Z(G)$, since otherwise $G$ is abelian.
The following result is straightforward.

\begin{lem}
\label{bad primes and sbgps}
Let $G$ be a non-abelian finite group, and let $H$ be a non-abelian subgroup of $G$.
If $p$ is a prime divisor of the order of $H$ which is a bad prime for mci in $G$, then it is also a bad prime in $H$.
\end{lem}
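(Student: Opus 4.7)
The plan is a direct verification from the definitions; the lemma is essentially a bookkeeping statement, which is why the authors call it straightforward. The two things to check, given that $p\mid |H|$, are that a Sylow $p$-subgroup $P_H$ of $H$ has order $p$, and that $C_H(P_H)=P_H Z(H)$.

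First I would handle the order. Let $P_H$ be a Sylow $p$-subgroup of $H$; since $p$ divides $|H|$ we have $P_H\ne 1$. As every Sylow $p$-subgroup of $H$ sits inside some Sylow $p$-subgroup of $G$, and the latter has order $p$ by hypothesis, it follows that $|P_H|=p$. In particular $P_H$ is itself conjugate in $G$ to the distinguished Sylow subgroup $P$ of the definition.

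Next I would transfer the centralizer condition. The equality $C_G(P)=PZ(G)$ is preserved under $G$-conjugation, so the conjugate subgroup $P_H$ also satisfies $C_G(P_H)=P_H Z(G)$. Intersecting with $H$ gives
\[
C_H(P_H) \;=\; C_G(P_H)\cap H \;=\; P_H Z(G)\cap H.
\]
Since $P_H\le H$, Dedekind's modular law yields $P_H Z(G)\cap H = P_H\,(Z(G)\cap H)$. Finally, $Z(G)\cap H\le Z(H)$ because any element of $Z(G)$ commutes with every element of $G$, in particular with every element of $H$. Combined with the obvious reverse inclusion $P_H Z(H)\le C_H(P_H)$, this gives $C_H(P_H)=P_H Z(H)$, proving that $p\in\pi^*(H)$.

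I do not expect any serious obstacle: the only points that demand a moment's care are the observation that the hypothesis on $P$ passes to any $G$-conjugate, and the modular-law step that pulls the intersection inside the product. The non-abelian hypothesis on $H$ is needed only so that $\pi^*(H)$ is defined at all.
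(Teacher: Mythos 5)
Your proof is correct and is exactly the direct verification the authors intend when they call the lemma ``straightforward'' (the paper gives no proof). The two key observations --- that the defining condition passes to $G$-conjugates of $P$, and that Dedekind's modular law turns $P_HZ(G)\cap H$ into $P_H(Z(G)\cap H)\le P_HZ(H)$ --- are precisely what is needed, and both are handled correctly.
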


We are now ready to prove Theorem B.

\begin{thm}
\label{mci for finite groups}
Let $G$ be a non-abelian finite group such that $\mci(G)=m$.
Then
\begin{equation}
\label{bound mci arbitrary finite}
|G| \le \bigg( \, \prod_{p\in\pi^*(G)}  \, p \, \bigg) \cdot f_0(m),
\end{equation}
where $f_0(m)$ is a function depending only on $m$.
\end{thm}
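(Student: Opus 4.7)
The plan is to prove that for each prime $p \in \pi(G) \setminus \pi^*(G)$, every Sylow $p$-subgroup $P$ satisfies $|P| \le g(m)$ for some function $g$ depending only on $m$. Once this is established, the inequality $p \le |P| \le g(m)$ forces $|\pi(G) \setminus \pi^*(G)|$ to be at most the number $N(m)$ of primes not exceeding $g(m)$, and hence
\[
|G| = \prod_{q \in \pi(G)} |P_q| \le \Bigl(\, \prod_{p \in \pi^*(G)} p \,\Bigr) \cdot g(m)^{N(m)},
\]
so I can take $f_0(m) = g(m)^{N(m)}$.

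Fix $p \notin \pi^*(G)$ with Sylow $p$-subgroup $P$. If $P$ is non-abelian, Proposition~\ref{non-central sylow} gives $|P| \le 8m^4$ at once. If $P$ is abelian and $P \le Z(G)$, not every Sylow subgroup of $G$ can be central, for otherwise $G$ would be the internal direct product of abelian central subgroups and hence itself abelian. Thus some prime $q \ne p$ admits a non-central element $y$ in a Sylow $q$-subgroup; since $\langle y \rangle$ has order coprime to $p$, $P \cap \langle y \rangle = 1$, and the chain $P \le Z(G) \le C_G(y)$ together with $|C_G(y):\langle y \rangle| \le m$ forces $|P| \le m$.

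The substantive case is $P$ abelian with $P \not\le Z(G)$. For any $g \in P \setminus Z(G)$, $P \le C_G(g)$ yields $|P| \le m|g|$, so the task reduces to finding such a $g$ of small order. If $|P| = p$, then $\langle g \rangle = P$ and $C_G(P) = C_G(g)$; since $p \notin \pi^*(G)$, some $h \in C_G(P)$ lies outside $PZ(G)$, and replacing $h$ by its $p'$-part $h'$ keeps it in $C_G(P) \setminus PZ(G)$ (its $p$-part must lie in $P$, because $\langle h_p \rangle P$ is a $p$-subgroup of order at most $|P|$). Then $g$ and $h'$ commute, $\langle g \rangle \cap \langle h' \rangle = 1$, so $\langle g \rangle \langle h' \rangle / \langle h' \rangle$ embeds in $C_G(h')/\langle h' \rangle$ and $p \le m$. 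If $|P| \ge p^2$ and $\Omega_1(P) \not\le Z(G)$, picking $g$ of order $p$ non-central gives $|P| \le mp$, whence $p \le m$ and $|P| \le m^2$. The remaining sub-case $|P| \ge p^2$ with $\Omega_1(P) \le Z(G)$ is the most delicate: every order-$p$ element of $P$ is then central, so a non-central $g \in P$ of minimal order satisfies $|g| \ge p^2$ and $g^p \in Z(G)$, and one has to combine the mci-bound on $g$ with the conjugation action of $G$ on $P$ (using that $|G:C_G(P)|$ divides $|\Aut(P)|$) to extract a bound on $|P|$ in terms of $m$ alone.

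The chief obstacle is exactly this last sub-case: the estimate $|P| \le m|g|$ leaves $|g|$ as a potentially unbounded power of $p$, so one must supplement it with a genuinely different source of control---either several independent non-central elements of $P$ each giving a small centralizer, or the constraint $|G:C_G(P)| \le |\Aut(P)|$ used together with the size of $C_G(P)$---to close the argument. Once $g(m)$ bounding $|P|$ has been produced in every case, the opening outline yields the claimed inequality.
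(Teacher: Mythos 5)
Your overall reduction (bound $|P|$ for every prime $p\notin\pi^*(G)$, then multiply the bounds) is exactly the paper's, and your treatment of the non-abelian case, of $P\le Z(G)$, of $|P|=p$, and of $\Omega_1(P)\not\le Z(G)$ all work. But the proof is not complete: the sub-case you yourself flag as ``the most delicate'' ($P$ abelian, $|P|\ge p^2$, $\Omega_1(P)\le Z(G)$) is left open, and neither of the tools you gesture at closes it. The inequality $|P|\le m\,o(g)$ is genuinely useless there: if, say, $P$ is cyclic with only its maximal subgroup central, then every non-central $g\in P$ generates $P$ and the inequality is vacuous. Likewise, the fact that $G/C_G(P)$ embeds in $\Aut(P)$ bounds the index of $C_G(P)$ in $G$, not the order of $P$, so it yields no upper bound on $|P|$.

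The idea you are missing is to bring in an element from \emph{outside} $P$ in this sub-case as well. Since $G=PZ(G)$ would force $G$ abelian, one can choose a $q$-element $x\in G\smallsetminus PZ(G)$ with $q\ne p$; then $\langle x\rangle\cap P=1$ gives $|C_P(x)|=|C_P(x)\langle x\rangle:\langle x\rangle|\le|C_G(x):\langle x\rangle|\le m$, and moreover $P\cap Z(G)\le C_P(x)$. If $x$ can be taken inside $C_G(P)$ (which is possible unless $C_G(P)=PZ(G)$), this already gives $|P|=|C_P(x)|\le m$. Otherwise $C_P(x)<P$; choose $D\le P$ with $|D:C_P(x)|=p$ and $y\in D\smallsetminus C_P(x)$. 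Then $y$ is non-central of order at most $|D|\le pm$, while $|P:D|\le|P:\langle y\rangle|\le|C_G(y):\langle y\rangle|\le m$, so $|P|\le pm^2$; finally $p\le m$ follows either from $p\le|C_P(x)|\le m$ or, if $C_P(x)=1$, from $p\le|P:D|\le m$. This manufactured small subgroup $C_P(x)$ of $P$ containing $P\cap Z(G)$ is precisely the ``different source of control'' you ask for, since any element just above it is non-central of bounded order; without some such device your argument cannot be completed.
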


\begin{proof}
It suffices to prove that, if $p\not\in\pi^*(G)$ and $P$ is a Sylow $p$-subgroup of $G$, then
$|P|$ is bounded by a function of $m$ (take into account that this also implies that the number of primes outside $\pi^*(G)$ is bounded in terms of $m$).

If $P$ is not abelian then $|P|$ is bounded by Proposition \ref{non-central sylow}, so we assume that $P$ is abelian.
In particular, we have $PZ(G)<G$.
Let us choose an arbitrary $q$-element $x\in G\smallsetminus PZ(G)$, for some prime
$q\ne p$.
Then
\begin{equation}
\label{centralizer q-element}
|C_P(x)| = |C_G(x)\cap P:\langle x \rangle \cap P| \le |C_G(x):\langle x \rangle| \le m.
\end{equation}
Now, since $p$ is not a bad prime, we have either $|P|\ge p^2$ or $PZ(G)<C_G(P)$.
In the latter case, we can choose $x$ in $C_G(P)$, and then $|P|=|C_P(x)|\le m$ by (\ref{centralizer q-element}).
Thus we assume that $PZ(G)=C_G(P)$ and $|P|\ge p^2$.
Then $C_P(x)<P$, and we can choose  a subgroup $D$ of $P$ such that $|D:C_P(x)|=p$. If $y\in D\smallsetminus C_P(x)$ then
\begin{equation}
\label{|P:D|}
|P:D|\le |P:\langle y \rangle| \le |C_G(y):\langle y \rangle| \le m,
\end{equation}
since $P$ is abelian, and consequently
\begin{equation}
\label{bound for P}
|P| = |P:D| \, |D:C_P(x)| \, |C_P(x)| \le pm^2.
\end{equation}
If $C_P(x)\ne 1$ then we get $p\le m$ from (\ref{centralizer q-element}).
Otherwise we have $|D|=p$, and consequently $|P:D|\ge p$.
By (\ref{|P:D|}), we also get $p\le m$ in this case.
Thus (\ref{bound for P}) yields that $|P|\le m^3$ in any case, and we are done.
\end{proof}

Our final goal is to see that there are only a few primes in $\pi^*(G)$.
We need a couple of lemmas.
The first one shows that $\pi^*(G)\subseteq \pi^*(G/Z(G))$, and that we can reduce to groups with trivial centre.

\begin{lem}
\label{reduction to Z(G)=1}
Let $G$ be a non-abelian finite group such that $\pi^*(G)$ is not empty.
If $p\in\pi^*(G)$ then the following hold:
\begin{enumerate}
\item
A Sylow $p$-subgroup of $G/Z(G)$ is self-centralizing.
\item
$G/Z(G)$ has trivial centre.
\item
$p\in\pi^*(G/Z(G))$.
\end{enumerate}
\end{lem}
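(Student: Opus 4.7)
The plan is to start with the observation that, since $P$ has order $p$ and $G$ is non-abelian, $P$ cannot be contained in $Z(G)$: otherwise $C_G(P)=G$ would force $G=PZ(G)=Z(G)$. Consequently $P\cap Z(G)=1$, which in turn implies that $p$ does not divide $|Z(G)|$ (otherwise the Sylow $p$-subgroup of $Z(G)$ would be contained in, and have the same order as, $P$). In particular, $\bar P:=PZ(G)/Z(G)\cong P$ is a Sylow $p$-subgroup of $\bar G:=G/Z(G)$ of order exactly $p$. This preparatory observation is what makes everything else work.

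For (i), I would write $P=\langle g\rangle$ and pick any $\bar x\in C_{\bar G}(\bar P)$. By definition, $[x,g]\in Z(G)$. Since $[x,g]$ is central, the commutator identity gives $[x,g]^p=[x,g^p]=1$, so $[x,g]$ has order dividing $p$. But $p\nmid|Z(G)|$, so $[x,g]=1$, meaning $x\in C_G(P)=PZ(G)$, and hence $\bar x\in\bar P$. The small subtlety here (and arguably the main thing to get right) is precisely this divisibility argument in $Z(G)$, which relies critically on the preparatory observation that $p\nmid|Z(G)|$; without it, the lift of the centralizer from $\bar G$ to $G$ would not behave.

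For (ii), I would note that $Z(\bar G)\le C_{\bar G}(\bar P)=\bar P$ by part (i), so $Z(\bar G)$ is a subgroup of the cyclic group $\bar P$ of prime order $p$, hence either trivial or equal to $\bar P$. In the latter case $\bar P$ would be central in $\bar G$, forcing $\bar G=C_{\bar G}(\bar P)=\bar P$, so $G/Z(G)$ would be cyclic and $G$ would be abelian, a contradiction. Therefore $Z(\bar G)=1$.

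Finally, (iii) is essentially bookkeeping: $\bar P$ is a Sylow $p$-subgroup of $\bar G$ of order $p$, and by (i) and (ii), $C_{\bar G}(\bar P)=\bar P=\bar P\,Z(\bar G)$, which is exactly the definition of $p\in\pi^*(\bar G)$.
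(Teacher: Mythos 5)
Your proof is correct, and it follows the same overall skeleton as the paper's: show that $\overline{P}=PZ(G)/Z(G)$ is self-centralizing in $\overline{G}=G/Z(G)$ via a commutator computation, deduce (ii) from $Z(\overline{G})\le C_{\overline{G}}(\overline{P})=\overline{P}$ by ruling out the case $Z(\overline{G})=\overline{P}$ (which would make $G/Z(G)$ cyclic), and then read off (iii). The one point where you genuinely diverge is the execution of (i). The paper writes $P=\langle x\rangle$, takes $\overline{y}\in C_{\overline{G}}(\overline{P})$, and puts the $p$th power on $y$: from $[x,y^p]=[x^p,y]=1$ it concludes $y^p\in C_G(P)=P\times Z(G)$ and then that $y^p\in Z(G)$, so $\overline{y}$ has order $p$ and, being a $p$-element centralizing the Sylow subgroup $\overline{P}$, must lie in it. You instead put the $p$th power on the generator of $P$: after observing up front that $p\nmid |Z(G)|$ (since $P\cap Z(G)=1$ and a central subgroup of order $p$ would be normal, hence contained in $P$), you get $[x,g]^p=[x,g^p]=1$ with $[x,g]\in Z(G)$, forcing $[x,g]=1$ and hence $x\in C_G(P)$ directly. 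Your variant is arguably a little cleaner, as it bypasses the paper's intermediate assertion that $y^p$ is a $p'$-element and the auxiliary fact that a $p$-element centralizing a Sylow $p$-subgroup lies inside it; both routes deliver the same conclusion with comparable effort.
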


\begin{proof}
Write $\overline G$ for $G/Z(G)$.
Let $P=\langle x \rangle$ be a Sylow $p$-subgroup of $G$, and choose
$\overline y\in C_{\overline G}(\overline P)$ with $\overline y\ne \overline 1$.
Then $[x,y]\in Z(G)$, and consequently $[x,y^p]=[x^p,y]=1$.
Hence $y^p\in C_G(P)=P\times Z(G)$.
Since $y^p$ is a $p'$-element, it follows that $y^p\in Z(G)$.
Thus $\overline y$ is an element of order $p$ in $\overline G$.
Since $\overline y$ centralizes $\overline P$, which is a Sylow $p$-subgroup of
$\overline G$, it follows that $\overline y\in \overline P$.
We conclude that $C_{\overline G}(\overline P)=\overline P$.
If $Z(\overline G)\ne \overline 1$ then necessarily $Z(\overline G)=\overline P$.
Hence $C_{\overline G}(\overline P)=\overline G$ and $\overline G=\overline P$.
This means that $G=PZ(G)$, and so $G$ is abelian, which is a contradiction.
It is now clear that $p\in\pi^*(\overline G)$.
\end{proof}

The second lemma, which can be easily checked, relates the condition of being a bad prime for mci with the \emph{prime graph\/} of $G$.
The set of vertices of this graph is $\pi(G)$, and two primes $p,q\in\pi(G)$ are connected if and only if there exists an element of order $pq$ in $G$.
The prime graph has been extensively studied in the literature; some of the most relevant references are \cite{gru-rog,iiy-yam,iiy-yam2,kon,wil}.
The most important fact about the prime graph is that it has at most six connected components, and actually at most two if the group is soluble.

\begin{lem}
\label{bad primes are isolated}
Let $G$ be a finite group with trivial centre.
Then the following conditions are equivalent for a prime $p$:
\begin{enumerate}
\item
$p$ is a bad prime for mci in $G$.
\item
$p$ divides the order of $G$ only to the first power, and $p$ is an isolated vertex of the prime graph of $G$ (i.e.\ $G$ does not have elements of order $pq$ for any prime
$q\ne p$).
\end{enumerate}
\end{lem}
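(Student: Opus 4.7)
The plan is to prove both implications by elementary Sylow arguments, using the assumption $Z(G)=1$ to simplify what it means for $p$ to be a bad prime for $\mci$: it reduces to saying that a Sylow $p$-subgroup $P$ of $G$ satisfies $|P|=p$ and $C_G(P)=P$.

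For (i) $\Rightarrow$ (ii), the claim that $p$ divides $|G|$ only to the first power is immediate from $|P|=p$. For the isolation of $p$ in the prime graph, I would argue by contradiction: suppose $y\in G$ has order $pq$ for some prime $q\ne p$, and write $y=y_py_q$ with $y_p,y_q$ the commuting $p$- and $q$-parts of $y$. Then $\langle y_p\rangle$ has order $p$ and is therefore itself a Sylow $p$-subgroup of $G$, conjugate to $P$, say $\langle y_p\rangle=P^g$. Since $y_q$ centralizes $y_p$, it lies in $C_G(P^g)=C_G(P)^g=P^g$, a $p$-group; as $y_q$ has order $q\ne p$, this forces $y_q=1$, a contradiction.

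For (ii) $\Rightarrow$ (i), I would take an arbitrary $y\in C_G(P)$ and decompose $y=y_py_{p'}$ into its $p$- and $p'$-parts, both of which commute with $P$. Since every Sylow $p$-subgroup of $G$ has order $p$, the abelian $p$-subgroup $\langle P,y_p\rangle$ must coincide with $P$, so $y_p\in P$. If $y_{p'}\ne 1$, choose a prime $q$ dividing its order and an element $z$ of order $q$ in $\langle y_{p'}\rangle$; then $z$ commutes with a generator of $P$, yielding an element of order $pq$ in $G$ and contradicting the isolation of $p$. Hence $y_{p'}=1$ and $y\in P$, so $C_G(P)\le P$; the reverse inclusion is clear because $P$ is abelian, and $PZ(G)=P$ because $Z(G)=1$, giving (i).

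No serious obstacle is expected; the proof is routine, as the authors indicate by saying it ``can be easily checked''. The only step requiring mild care is the conjugation in (i) $\Rightarrow$ (ii), where one must first replace $P$ by the conjugate $P^g$ that actually contains the given $p$-element $y_p$ before invoking the self-centralizing property.
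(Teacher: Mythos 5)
Your proof is correct, and since the paper gives no proof of this lemma (the authors merely remark that it ``can be easily checked''), your argument supplies exactly the routine Sylow-theoretic verification they had in mind: with $Z(G)=1$ the bad-prime condition reads $|P|=p$ and $C_G(P)=P$, and both implications follow from decomposing elements into commuting primary parts as you do. The one step needing care --- conjugating $P$ so that it contains the given $p$-part before invoking self-centralization --- is handled properly.
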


\begin{thm}
\label{bound for pi star}
Let $G$ be a non-abelian finite group.
Then the following hold:
\begin{enumerate}
\item
$|\pi^*(G)|\le 5$, and if the equality holds then $\pi^*(G)=\{ 23,29,31,37,43 \}$.
\item
If $G$ is soluble then $|\pi^*(G)|\le 2$.
\end{enumerate}
\end{thm}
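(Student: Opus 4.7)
The plan is to reformulate the problem in terms of the prime graph and then appeal to the classification of finite groups with disconnected prime graph. First I would reduce to the case $Z(G)=1$: Lemma \ref{reduction to Z(G)=1} gives $\pi^*(G)\subseteq\pi^*(G/Z(G))$, and $G/Z(G)$ is again non-abelian with trivial centre, so any bound proved for centreless groups transfers back to $G$. Once $Z(G)=1$, Lemma \ref{bad primes are isolated} identifies $\pi^*(G)$ with the set of primes $p$ that are isolated vertices of $\Gamma(G)$ and divide $|G|$ to exactly the first power. Consequently $|\pi^*(G)|$ is at most the number of isolated vertices of $\Gamma(G)$, and in particular at most the number of connected components of $\Gamma(G)$.

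For part (ii), the Gruenberg--Kegel theorem \cite{gru-rog} states that for soluble groups $\Gamma(G)$ has at most two components. Since each isolated vertex is a component on its own, this immediately yields $|\pi^*(G)|\le 2$.

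For part (i), I would invoke the general classification by Williams \cite{wil}, Iiyori--Yamaki \cite{iiy-yam,iiy-yam2} and Kondrat'ev \cite{kon}, which states that $\Gamma(G)$ has at most six components and that, when $\Gamma(G)$ is disconnected, $G$ fits into a normal series $1\trianglelefteq N\trianglelefteq H\trianglelefteq G$ with $N$ nilpotent, $G/H$ soluble, $H/N$ almost simple, and the non-principal components of $\Gamma(G)$ coinciding with non-principal components of $\Gamma(H/N)$. I would then scan the explicit list of non-abelian simple groups whose prime graph has at least five singleton components, each prime dividing the order exactly once. A case-by-case inspection (using the tables for alternating groups, sporadic groups and the various families of Lie type) leaves $J_4$ as the unique possibility: its order factors as $|J_4|=2^{21}\cdot 3^3\cdot 5\cdot 7\cdot 11^3\cdot 23\cdot 29\cdot 31\cdot 37\cdot 43$, with $\{2,3,5,7,11\}$ forming a single connected component and $\{23\},\{29\},\{31\},\{37\},\{43\}$ being the five isolated singletons, each occurring with multiplicity one in $|J_4|$. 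Finally, the constraints on $N$ and $G/H$ imposed by the classification force all extra primes introduced by them to lie in the principal component, so the five isolated first-power primes of $\Gamma(H/N)$ remain isolated first-power in $\Gamma(G)$ itself, and hence $\pi^*(G)=\{23,29,31,37,43\}$.

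The main obstacle is the finite but intricate case analysis hidden in the last step: one must go through the Iiyori--Yamaki--Kondrat'ev tables of prime graph components of non-abelian simple groups, count in each case the isolated vertices that appear with first-power multiplicity, and rule out every possibility other than $J_4$ reaching five. A secondary delicate point is verifying the robustness of the bad-prime condition under the passage from $H/N$ to $G$, namely that neither the nilpotent kernel $N$ nor the soluble quotient $G/H$ can increase the multiplicity of an isolated prime or connect it to another vertex of $\Gamma(G)$; this is where the precise structure of the classification theorem is used.
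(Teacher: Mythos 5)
Your overall strategy coincides with the paper's: reduce to $G/Z(G)$ with trivial centre via Lemma \ref{reduction to Z(G)=1}, translate the bad-prime condition into ``isolated vertex dividing the order to the first power'' via Lemma \ref{bad primes are isolated}, and then bound the number of such vertices by the number of connected components of the prime graph. Part (ii) is handled identically in both arguments.

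For part (i), however, your argument as written has a gap: from ``at most six components'' you only get $|\pi^*(G)|\le 6$, and the entire descent to $5$, together with the identification of the equality case, is delegated to a ``case-by-case inspection'' of the classification tables that you acknowledge you have not carried out. The paper closes this gap with two short observations that your proposal misses. First, if $G/Z(G)$ is non-soluble then $4$ divides its order (cyclic Sylow $2$-subgroup would force a normal $2$-complement, hence solubility by Feit--Thompson), so $2$ is never a bad prime; since $2$ always lies in the principal component, this immediately yields $|\pi^*(G)|\le 6-1=5$ with no table-scanning. Second, in the equality case the prime graph of $G/Z(G)$ must have exactly six components, and Theorem B of \cite{zav} (a recognition theorem, not a scan of all simple groups) gives $G/Z(G)\cong J_4$ directly; one then reads off the five isolated first-power primes from Table IIa of \cite{wil}. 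Your proposed scan would also have to rule out five-component groups such as $E_8(q)$ contributing five bad primes, which again comes down to the observation about $2$ that you omit. So the plan is salvageable, but the decisive step that makes the proof short --- excluding $2$ from $\pi^*$ for non-soluble groups --- is absent, and the case analysis you lean on is both unexecuted and heavier than necessary.
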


\begin{proof}
Let us assume that $\pi^*(G)$ is not empty.
By Lemma \ref{reduction to Z(G)=1}, we know that $\pi^*(G)\subseteq \pi^*(G/Z(G))$, and that $G/Z(G)$ has trivial centre.
Then according to Lemma \ref{bad primes are isolated}, a prime $p$ lies in $\pi^*(G/Z(G))$ if and only if $p$ divides $|G/Z(G)|$ only to the first power, and $p$ is an isolated vertex of the prime graph of $G/Z(G)$.

If $G/Z(G)$ is soluble then the prime graph has at most two connected components, and (ii) follows.
Assume now that $G/Z(G)$ is not soluble.
Then $4$ divides the order of $G/Z(G)$, and thus $2$ is not a bad prime for mci in
$G/Z(G)$.
Since the prime graph of $G/Z(G)$ has at most six connected components, it follows that
$|\pi^*(G)|\le 5$.
If the equality holds then necessarily $\pi^*(G)=\pi^*(G/Z(G))$, and the prime graph of
$G/Z(G)$ has exactly six components.
By Theorem B of \cite{zav}, $G/Z(G)$ is isomorphic to $J_4$ the fourth Janko group, and consequently $\pi^*(G)=\pi^*(J_4)$.
Now, according to Table IIa of \cite{wil}, the prime graph of $J_4$ has five isolated vertices, corresponding to the primes $23$, $29$, $31$, $37$, and $43$.
Since all these primes divide the order of $J_4$ only to the first power, it follows that
$\pi^*(J_4)=\{ 23,29,31,37,43 \}$, and we are done.
\end{proof}

\begin{rmk}
One can prove (ii) in the previous theorem without using the fact that the prime graph of an \emph{arbitrary\/} finite soluble group has at most two connected components.
Indeed, assume that $G$ is a finite non-abelian soluble group such that $\pi^*(G)$ is not empty.
Let $R$ be a Hall $\pi^*$-subgroup of $G/Z(G)$, where $\pi^*=\pi^*(G/Z(G))$, and note that all primes in $\pi^*$ are isolated in the prime graph of $G/Z(G)$, by
Lemmas \ref{reduction to Z(G)=1} and \ref{bad primes are isolated}.
Then the order of $R$ is square-free and so by \cite[10.1.10]{rob} we can write
$R=S\ltimes T$, where $S$ and $T$ are cyclic subgroups of $R$ of coprime order.
Since all vertices of the prime graph of $R$ are isolated, it follows that both $S$ and $T$ are of prime order or trivial, and consequently $|\pi^*|\le 2$.
Thus also $|\pi^*(G)|\le 2$.
\end{rmk}

Now parts (ii) and (iii) of Theorem C follow immediately from Theorem B and
Theorem \ref{bound for pi star}.
Simply observe that in the case that $|\pi^*(G)|=5$ the product
$\prod_{p\in\pi^*(G)} \, p$ is a fixed number that can be incorporated to the function depending only on $m$.

\vspace{7pt}

To what extent can part (iii) of Theorem C be sharpened?
Can we get a bound for the order of $G$ with a product of three primes, instead of four?
This question does not have a clear answer, and is connected to deep questions in number theory.
Let us have a closer look at it.
Assume that $|\pi^*(G)|=4$.
Then $|\pi^*(G/Z(G))|\ge 4$, and the same argument used above shows that the prime graph of $G/Z(G)$ has at least five connected components.
If it has six components then $G/Z(G)\cong J_4$, and we get
$\pi^*(G)\subseteq \{23,29,31,37,43\}$.
Now if the prime graph of $G/Z(G)$ has five components, then by Theorem 1 of \cite{zav2} we have $G/Z(G)\cong E_8(q)$ for some prime power $q\equiv 0,\pm 1 \pmod 5$.
By Table Ie of \cite{wil} and Table III of \cite{iiy-yam}, the connected components of the prime graph of $G/Z(G)$ which do not contain $2$ coincide with the sets of prime divisors of the following values:
\begin{equation}
\label{cyclotomic}
\begin{split}
\Phi_{15}(q) &= q^8-q^7+q^5-q^4+q^3-q+1,
\\
\Phi_{20}(q) &= q^8-q^6+q^4-q^2+1,
\\
\Phi_{24}(q) &= q^8-q^4+1,
\\
\Phi_{30}(q) &= q^8+q^7-q^5-q^4-q^3+q+1,
\end{split}
\end{equation}
where $\Phi_n(x)$ stands for the $n$th cyclotomic polynomial over the rationals.
Since $|\pi^*(G)|=4$, it follows that these four connected components must reduce to a single prime, i.e.\ that the four values in (\ref{cyclotomic}) must be prime powers.
The corresponding primes are exactly the bad primes for mci in $E_8(q)$.
Now the values in (\ref{cyclotomic}) are also divisors of the order of $E_8(q)$, and since they are powers of bad primes, it follows that those values are actually prime numbers.
Hence the following question arises: can the cyclotomic polynomials $\Phi_{15}(x)$,
$\Phi_{20}(x)$, $\Phi_{24}(x)$ and $\Phi_{30}(x)$ take simultaneously prime values on a prime power $q$?
If this can only happen for a finite number of choices of $q$, then we could incorporate the corresponding bad primes to the function which depends only on $m$ in the bound of Theorem C.
So we reformulate the previous question, and ask: can the cyclotomic polynomials
$\Phi_{15}(x)$, $\Phi_{20}(x)$, $\Phi_{24}(x)$ and $\Phi_{30}(x)$ take simultaneously prime values on \emph{infinitely many\/} prime powers $q$?
This is a problem in number theory which is connected to the Bunyakovsky conjecture, which asserts that an irreducible polynomial $f(x)$ over the integers such that the values
$\{ f(n)\mid n\in\N \}$ are relatively prime should take infinitely many prime values over the positive integers.
To date this has only been settled (in the positive) for linear polynomials,
by Dirichlet's theorem on arithmetic progressions.

\vspace{20pt}

\noindent
\textit{Acknowledgment\/}.
We thank Alexander Moret\'o for helpful comments regarding the prime graph of a finite group.

\end{document}